 \newtheorem{thm}{Theorem}[section]
 \newtheorem{cor}[thm]{Corollary}
 \newtheorem{prop}[thm]{Proposition}
 \theoremstyle{definition}
 \newtheorem{defn}[thm]{Definition}
 \theoremstyle{remark}
 \numberwithin{equation}{section}
\DeclareMathOperator{\Dom}{{\mathcal{D}}}
\DeclareMathOperator{\Lin}{{\mathcal{L}}}
\DeclareMathOperator{\Real}{Re}
\DeclareMathOperator{\Image}{Im}
\DeclareMathOperator{\R}{{\mathbb{R}}}
\DeclareMathOperator{\const}{const}
\newcommand{\A}{{\mathbf A}}
\newcommand{\vectx}{{\mathbf x}}
\newcommand{\vecty}{{\mathbf y}}
\newcommand{\Hil}{{\mathcal H}}
\newcommand{\semigroup}{{\mathcal T}}
\begin{document}
%
%
%
%
%
%
%
%
%
\title[Exponential decay of semigroups for second-order linear differential equations]
 {Exponential decay of semigroups for second order non-selfadjoint linear differential equations}
\author[Nikita Artamonov]{Nikita Artamonov}

\address{%
Dept. for Econometric and Mathematical methods in economics,\\  Moscow State Institute of International Relations (University)\\
119454 av. Vernadskogo 76,  Moscow\\
Russia}

\email{nikita.artamonov@gmail.com}

\subjclass{Primary 47D06,  34G10; Secondary 47B44, 35G15}

\thanks{This paper is supported by the Russian Foundation of Basic Research (project No 08-01-00595)}

\keywords{Accretive operator, sectorial operator,  $C_0$-semigroup,
second order linear differential equation, spectrum}

\date{February 26, 2010}


\begin{abstract}
The Cauchy problem for second order linear differential equation
 \[
    u''(t)+Du'(t)+Au(t)=0
 \]
in Hilbert space $H$ with a sectorial operator $A$ and an accretive operator $D$ is studied. Sufficient conditions for
exponential decay of the solutions are obtained.
\end{abstract}

\maketitle

Many linearized equations of mechanics and mathematical physics can
be reduced to a linear differential equation
 \begin{equation}\label{MainEquation}
     u''(t)+Du'(t)+Au(t)=0, 
 \end{equation}
where $u(t)$ is a vector-valued function in an appropriate (finite or infinite dimensional) Hilbert space $H$, $D$ and $A$
are linear (bounded or unbounded) operators on $H$. Properties of the differential equation \eqref{MainEquation} are
closely connected with spectral properties of a quadric pencil
 \[
     L(\lambda)=\lambda^2+\lambda D+A,\quad \lambda\in\mathbb{C}
 \]
which is obtained by substituting exponential functions $u(t)=\exp(\lambda t)x$, $x\in H$ into \eqref{MainEquation}.
In many applications $A$ is a self-adjoint positive definite operator, $D$ is a self-adjoint positive definite or an
accretive operator  (see definition in section 1). In this case the differential equation \eqref{MainEquation} and spectral properties of
the related quadric pencil $L(\lambda)$ are well-studied, see \cite{ChenTriggiani, HrynivShkalikov1, HrynivShkalikov2,Huang, JacobTrunk1, JacobTrunk2, JacobMorrisTrunk, JacobTrunkWinklmeier, Markus}
and references therein. It was obtained a localization of the pencil's spectrum, sufficient
conditions of the completeness of eigen- and adjoint vectors of the pencil $L(\lambda)$ and it was proved, that all solutions of \eqref{MainEquation} exponentially decay. The exponential decay means, that the total energy  exponentially
decreases and corresponding mechanical system is stable. In paper
\cite{Shkalikov} was studied spectral properties of the pencil $L(\lambda)$ for a self-adjoint non-positive definite
operator $A$ and an accretive operator $D$.

But some models of continuous mechanics are reduced to differential equation \eqref{MainEquation} with sectorial
operator $A$, see \cite{AglazinKiiko1, IlyushinKiiko1, Artamonov1} and references therein. In this cases methods,
developed for self-adjoint operator $A$, cannot be applied.

The aim of this paper is the study of a Cauchy problem for second-order linear differential equation \eqref{MainEquation}
in a Hilbert space $H$ with initial conditions
 \begin{equation}\label{CauchyProblem}
  u(0)=u_0\quad u'(0)=u_1.
 \end{equation}
The shiffness operator $A$ is assumed to be a sectorial operator,  the damping operator $D$ is assumed to
be  an accretive operator.

By $\Lin(H',H'')$ denote a space of bounded operators acting from a Hilbert space $H'$ to a Hilbert
space $H''$. $\Lin(H)=\Lin(H,H)$ is an algebra of bounded operators acting on  Hilbert space $H$.

\section{Preliminary results}

First let us recall some definitions \cite{Haase, Kato}.
\begin{defn}
Linear operator $B$ with dense domain $\Dom(B)$ is called \textit{accretive} if
$\Real(Bx,x)\geq0$ for all $x\in\Dom(B)$ and \textit{m-accretive}, if the range of operator $B+\omega I$
is dense in $H$ for some $\omega>0$.
\end{defn}
An accretive operator $B$ is m-accretive iff $B$ has not accretive extensions \cite{Kato}. For m-accretive operator
 \[
     \rho(B)\supset\{\lambda\in\mathbb{C}\;:\; \Real\lambda< 0\}.
 \]

\begin{defn}
An accretive operator $B$ is called \textit{sectorial} or \textit{$\omega$-accretive} if for some $\omega\in[0,\pi/2)$
 \[
     \bigl|\Image(Bx,x)\bigr|\leq \tan(\omega) \Real(Bx,x)\quad x\in\Dom(B).
 \]
If a sectorial operator has not sectorial extensions, then it's called \textit{m-sectorial} or
\textit{m-$\omega$-accretive}.
\end{defn}
The sectorial property means that the numerical range of the operator $B$  belongs to a sector
 \[
    \{z\in\mathbb{C}\;|\; |\Image z|\leq \tan(\omega)\Real z\}.
 \]
For a sectorial operator $B$ there exist \cite{Kato} a self-adjoint non-negative operator $T_B$ and a self-adjoint
operator $S_B\in\Lin(H)$, $\|S_B\|\leq\tan(\omega)$ such that
 \[
    \Real(B x,x)=(T_B^{1/2}x,T_B^{1/2}x), \quad
     B\subset T_B^{1/2}(I+iS_B)T_B^{1/2}
 \]
and $B=T_B^{1/2}(I+iS_B)T_B^{1/2}$ iff $B$ is m-sectorial.

Throughout this paper we will assume, that
\begin{itemize}
  \item[(A)] Operator $A:\Dom(A)\subset H\to H$ is m-sectorial and for some positive $a_0$
  \[
     \Real(Ax,x)\geq a_0(x,x)\quad x\in\Dom(A).
  \]
\end{itemize}
Since $A$ is m-sectorial there exist a self-adjoint positive definite operator $T$ and a
self-adjoint $S\in\Lin(H)$, such that
 \begin{gather*}
     \Real(Ax,x)=(T^{1/2}x,T^{1/2}x)\geq a_0(x,x),\quad x\in\Dom(A) \\
     A=T^{1/2}(I+iS)T^{1/2}.
  \end{gather*}
The operator  $A$ is invertible and
 \[
    A^{-1}=T^{-1/2}(I+iS)^{-1}T^{-1/2}.
 \]
By $H_s$ $(s\in\R)$ denote a collection of Hilbert spaces generated by a self-adjoint operator $T^{1/2}$:
\begin{itemize}
\item for $s\geq 0$ $H_s=\Dom(T^{s/2})$  endowed with a norm $\|x\|_s=\|T^{s/2}x\|$;
\item for $s<0$ $H_s$ is a closure of $H$ with respect to the norm $\|\cdot\|_s$.
\end{itemize}
Obviously $H_0=H$. The operator $T^{1/2}$ can be considered now as an unitary operator mapping $H_{s}$ on $H_{s-1}$.
$A$ is a bounded operator  $A\in\Lin(H_2,H_0)$ and it can be extended to a bounded
operator $\tilde{A}\in\Lin(H_1,H_{-1})$. The inverse operator $A^{-1}$ can be extended to a bounded
operator $\tilde{A}^{-1}\in\Lin(H_{-1},H_{1})$.

 By $(\cdot,\cdot)_{-1,1}$ denote a duality pairing  on $H_{-1}\times H_{1}$. Note, that for all $x\in H_{-1}$ and
$y\in H_1$ we have
  \[
     \Bigl|(x,y)_{-1,1}\Bigr|\leq \|x\|_{-1}\cdot\|y\|_1
  \]
and $(x,y)_{-1,1}=(x,y)$ if $x\in H$. Further,
 \[
     \Real (\tilde{A}x,x)_{-1,1}=(Tx,x)_{-1,1}=(T^{1/2}x,T^{1/2}x)=\|x\|^2_1,\quad x\in H_{1}=\Dom(T^{1/2}).
 \]
Denote $\tilde{S}=T^{1/2}ST^{1/2}\in\Lin(H_{1},H_{-1})$. Then, for the operator $\tilde{A}$ we have
a representation $\tilde{A}=T+i\tilde{S}$ and
 \[
   \Image(\tilde{A}x,x)_{-1,1}=(\tilde{S}x,x)_{-1,1}\quad x\in H_1.
 \]
Also $(\tilde{S}x,y)_{-1,1}=\overline{(\tilde{S}y,x)}_{-1,1}$ for all $x,y\in H_1$.

Following paper \cite{JacobTrunk2} we will assume
\begin{itemize}
 \item[(B)] $D$ is a bounded operator $D\in\Lin(H_{1},H_{-1})$, and
  \begin{equation}\label{ConditionBeta}
       \beta=\inf_{x\in H_{1},x\ne 0}\frac{\Real(Dx,x)_{-1,1}}{\|x\|^2}>0.
  \end{equation}
\end{itemize}
Operator $T^{-1/2}$ is an unitary operator mapping $H_s$ on
$H_{s+1}$, therefore an operator $D'=T^{-1/2}DT^{-1/2}$, acting on
$H$, is bounded. Let
 \[
    D_1=\frac12T^{1/2}\Bigl(D'+(D')^*\Bigr)T^{1/2}\quad
    D_2=\frac{1}{2i}T^{1/2}\Bigl(D'-(D')^*\Bigr)T^{1/2},
 \]
Obviously $D_1,D_2\in\Lin(H_{1},H_{-1})$,  $D=D_1+iD_2$ and for all $x\in H_1$
 \[
     \Real(Dx,x)_{-1,1}=(D_1x,x)_{-1,1}\geq\beta\|x\|^2,
     \quad \Image(Dx,x)_{-1,1}=(D_2x,x)_{-1,1}.
 \]
Also $(D_j x,y)_{-1,1}=\overline{(D_j y,x)}_{-1,1}$ for all $x,y\in H_1$ ($j=1,2$).

\section{Main result}

\begin{defn}
A vector-valued function $u(t)\in H_1$ is called a solution of the differential equation
\eqref{MainEquation} if $u'(t)\in H_1$, $u''(t)\in H$, $Du'(t)+\tilde{A}u(t)\in H$ and
 \begin{equation}\label{MainEquationGen}
     u''(t)+Du'(t)+\tilde{A}u(t)=0
 \end{equation}
\end{defn}
If $u(t)$ is a solution of \eqref{MainEquationGen}, then a vector-function
 \[
    \vectx(t)=\begin{pmatrix} u'(t) \\ u(t) \end{pmatrix}
 \]
(formally) satisfies a first-order differential equation
 \begin{equation}\label{FirstOrderEqn}
    \vectx'(t)= \A \vectx(t)
 \end{equation}
with a block operator matrix
 \[
     \A=\begin{pmatrix}
       -D &  -\tilde{A} \\
        I & 0
    \end{pmatrix}.
 \]
From mechanical viewpoint it is most natural to consider the equation \eqref{FirstOrderEqn} in an
"energy" space $\Hil=H\times H_{1}$ with a dense domain of the operator $\A$
\cite{HrynivShkalikov1, HrynivShkalikov2, JacobTrunk2, Shkalikov}
 \[
    \Dom(\A)=\left\{\left.\begin{pmatrix} x_1 \\ x_2 \end{pmatrix} \right|
     x_1,x_2\in H_1, \; Dx_1+\tilde{A}x_2\in H
     \right\}.
 \]
An inverse of $\A$ is formally defined by a block operator matrix
 \[
     \A^{-1}=\begin{pmatrix}
       0 & I \\
       -\tilde{A}^{-1}  & -\tilde{A}^{-1}D
      \end{pmatrix}.
 \]
Let $\vecty=(y_1,y_2)^\top\in\Hil=H\times H_{1}$, then
 \[
     \A^{-1}\vecty=\begin{pmatrix}
       y_2 \\ -\tilde{A}^{-1}y_1-\tilde{A}^{-1}Dy_2
     \end{pmatrix}=\begin{pmatrix}
       x_1 \\ x_2
     \end{pmatrix}.
 \]
Since  $\tilde{A}^{-1}\in\Lin(H_{-1},H_1)$ and $D\in\Lin(H_{1},H_{-1})$, then
$\tilde{A}^{-1}D\in\Lin(H_1,H_1)$. Therefore
$-\tilde{A}^{-1}y_1-\tilde{A}^{-1}Dy_2\in H_1$ and $\A^{-1}\vecty\in H_{1}\times H_1$.
Moreover,
 \[
    D x_1+\tilde{A}x_2=D y_2+\tilde{A}\left(-\tilde{A}^{-1}y_1-\tilde{A}^{-1}Dy_2\right)
    =-y_1\in H.
 \]
Thus $\A^{-1}\vecty\in\Dom(\A)$. Since $I\in\Lin(H_1,H)$ the operator $\A^{-1}$ is bounded and therefore
the operator $\A$ is closed and $0\in\rho(\A)$.

Let $(\vectx,\vecty)_{\Hil}$ be a natural scalar product on $\Hil=H\times H_1$ and
$\|\vectx\|_{\Hil}^2=(\vectx,\vecty)_{\Hil}$.

If operator $A$ is self-adjoint, the spectral properties of operator $\A$ are well-studied: $-\A$ is
an m-accretive operator in the Hilbert space $\Hil=H\times H_1$ (see
\cite{ChenTriggiani,  HrynivShkalikov1, HrynivShkalikov2,  Huang, JacobTrunk1, JacobTrunk2} and references therein) and,
consequently, $\A$ is a generator of a $C_0$-semigroup. Thus,  differential equation \eqref{FirstOrderEqn} (and equation \eqref{MainEquationGen}) is correctly solvable in
the space $\Hil$ for all $\vectx(0)=(u_1,u_0)^\top\in\Dom(\A)$. Moreover, in this case operator $\A$ is a generator of a
contraction semigroup \cite{HrynivShkalikov2}. It implies, that all solutions of \eqref{FirstOrderEqn} (and
\eqref{MainEquationGen}) exponentially decay, i.e. for some $C,\omega>0$
 \[
     \|\vectx(t)\|_{\Hil}\leq C\exp(-\omega t)\|\vectx(0)\|_{\Hil}\quad t\geq 0.
 \]
For non-selfadjoint $A$ operator ($-\A$) is not longer accretive in
the space $\Hil$ with respect to the standard scalar product. But,
under some assumptions, one can define a new scalar product on
$\Hil$, which is topologically equivalent to the given one, such
that an operator ($-\A-qI$) (for some $q\geq0$) is m-accretive and
therefore the operator $\A$ is a generator of a $C_0$-semigroup on
$\Hil$. If $q>0$, then $\A$ is a generator of a contraction
semigroup and all solutions of \eqref{FirstOrderEqn} exponentially
decay.

Let $k\in(0,\beta)$ ($\beta$ is defined by \eqref{ConditionBeta}). Consider on the space $\Hil$ a sesquilinear form
 \begin{multline*}
    [\vectx,\vecty]_{\Hil}=\\ (T^{1/2}x_2,T^{1/2}y_2)+k(D_1x_2,y_2)_{-1,1}-k^2(x_2,y_2)+
    (x_1+kx_2,y_1+ky_2),\\
  \vectx=(x_1,x_2)^\top,\; \vecty=(y_1,y_2)^\top\in\Hil.
 \end{multline*}
Obviously, $[\vectx,\vecty]=\overline{[\vecty,\vectx]}$ and
 \[
   [\vectx,\vectx]_{\Hil}=\|x_2\|_1^2+k(D_1x_2,x_2)_{-1,1}+\|x_1\|^2+2k\Real(x_1,x_2).
 \]
Since $(D_1x,x)_{-1,1}=\Real(Dx,x)_{-1,1}\geq\beta\|x\|^2$ and
 \[
  2|\Real(x_1,x_2)|\leq2|(x_1,x_2)|\leq 2\|x_1\|\cdot\|x_2\|\leq
  \frac{\|x_1\|^2}{\beta}+\beta\|x_2\|^2,
 \]
then
 \begin{multline*}
   [\vectx,\vectx]_{\Hil}\geq \|x_2\|^2_1+k\Bigl((D_1x,x)_{-1,1}-\beta\|x_2\|^2\Bigr)+\left(1-\frac{k}{\beta}\right)\|x_1\|^2\geq\\
  \|x_2\|^2_{1}+\left(1-\frac{k}{\beta}\right)\|x_1\|^2.
 \end{multline*}
Inequatlities\footnote{$\|D_1\|$ is a norm of operator $D_1\in\Lin(H_{1},H_{-1})$, i.e.
$\|D_1\|=\sup_{x\in H_1,x\ne0}\|D_1x\|_{-1}/\|x\|_1$}
$|(D_1x,x)_{-1,1}|\leq \|D_1x\|_{-1}\cdot\|x\|_1\leq \|D_1\|\cdot\|x\|^2_1 $  and $\|x\|^2_1\geq a_0\|x\|^2$ imply
 \begin{multline*}
    [\vectx,\vectx]_{\Hil}\leq \Bigl(1+k\|D_1\|\Bigr)\|x_2\|^2_1+k\beta \|x_2\|^2+\left(1+\frac{k}{\beta}\right)\|x_1\|^2 \\
   \leq\left(1+k\|D_1\|+\frac{k\beta}{a_0}\right)\|x_2\|^2_1+\left(1+\frac{k}{\beta}\right)\|x_1\|^2.
 \end{multline*}
Thus,
 \[
    \left(1-\frac{k}{\beta}\right)\|\vectx\|^2_{\Hil}\leq [\vectx,\vectx]_{\Hil}
    \leq \const \|\vectx\|^2_{\Hil}
 \]
and $[\cdot,\cdot]_{\Hil}$ is a scalar product on $\Hil$, which is topologically equivalent to the given one.
Denote $|\vectx|^2_{\Hil}=[\vectx,\vectx]_{\Hil}$.


\begin{thm}\label{MainTheorem1}
Let the assumptions (A) and (B) hold and for some $k\in(0,\beta)$ and $m\in(0,1]$
 \begin{equation}\label{Omega1}
    \omega_1=\inf_{x\in H_1, x\ne0}\frac{\frac{1}{k}(D_1x,x)_{-1,1}-\|x\|^2-
    \frac{1}{4m}\bigl\|(\frac{1}{k}\tilde{S}-D_2)x\bigr\|_{-1}}
    {\|x\|^2}\geq0.
 \end{equation}
Then the operator $\A$ is a generator of a $C_0$-semigroup $\semigroup(t)=\exp\{t\A\}$ $(t\geq0)$ and
 \[
     \bigl\|\semigroup(t)\bigr\|_{\Hil}\leq \const\cdot \exp(-t k\theta)
 \]
where
  \[
      \theta=\min\left\{\frac{\omega_1}{2}, \frac{1-m}{\omega_2}\right\}\geq 0
  \]
and\footnote{Obviously, $\omega_2\leq 1+k\|D_1\|+k^2/a_0$}
 \begin{equation}\label{Omega2}
     \omega_2=\sup_{x\in H_1,x\ne0} \frac{\|x\|^2_1+k(D_1x,x)_{-1,1}+k^2\|x\|^2}{\|x\|^2_1}
  \end{equation}
\end{thm}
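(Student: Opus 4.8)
The plan is to verify the Lumer--Phillips conditions for $\A+k\theta I$ on $\Hil$ equipped with the equivalent inner product $[\cdot,\cdot]_{\Hil}$. Concretely, I will prove the dissipativity estimate
\[
  \Real[\A\vectx,\vectx]_{\Hil}\le -k\theta\,[\vectx,\vectx]_{\Hil},\qquad \vectx=(x_1,x_2)^\top\in\Dom(\A),
\]
i.e.\ that $-\A-k\theta I$ is accretive with respect to $[\cdot,\cdot]_{\Hil}$. Since $\Dom(\A)$ is dense and, by what precedes, $0\in\rho(\A)$, one may choose $\omega>0$ with $\omega-k\theta\in\rho(\A)$ (take $\omega=k\theta$ if $k\theta>0$, and any small $\omega>0$ otherwise, using that $\rho(\A)$ is open), so that $(-\A-k\theta I)+\omega I$ is boundedly invertible, hence surjective, and therefore $-\A-k\theta I$ is $m$-accretive. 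By the Lumer--Phillips theorem $\A+k\theta I$ then generates a contraction semigroup in $|\cdot|_{\Hil}$, so $|\semigroup(t)|_{\Hil}\le e^{-tk\theta}$, and the two-sided estimate $(1-k/\beta)\|\cdot\|_{\Hil}^2\le|\cdot|_{\Hil}^2\le\const\|\cdot\|_{\Hil}^2$ established above upgrades this to $\|\semigroup(t)\|_{\Hil}\le\const\cdot e^{-tk\theta}$.

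The first and principal step is an exact formula for $\Real[\A\vectx,\vectx]_{\Hil}$. If $\vectx\in\Dom(\A)$ then $x_1,x_2\in H_1$ and $Dx_1+\tilde Ax_2\in H$, so every term below is a legitimate $H$-inner product or $H_{-1}\times H_1$-pairing; this is the only place membership in $\Dom(\A)$ is used. Substituting $\A\vectx=(-Dx_1-\tilde Ax_2,\,x_1)^\top$ into the definition of $[\cdot,\cdot]_{\Hil}$, the terms $\pm k^2(x_1,x_2)$ cancel. Taking real parts and invoking $\Real(\tilde Ax,x)_{-1,1}=\|x\|_1^2$, $\Real(Dx,x)_{-1,1}=(D_1x,x)_{-1,1}$, the polarized identity $(T^{1/2}x,T^{1/2}y)=(Tx,y)_{-1,1}=\overline{(Ty,x)_{-1,1}}$, the splittings $\tilde A=T+i\tilde S$ and $D=D_1+iD_2$, and the Hermitian symmetries of $\tilde S,D_1,D_2$ recorded in Section~1, all the cross terms in $x_1,x_2$ collapse to a single one and we get
\[
  \Real[\A\vectx,\vectx]_{\Hil}=-(D_1x_1,x_1)_{-1,1}+k\|x_1\|^2-k\|x_2\|_1^2-\Image\bigl((\tilde S-kD_2)x_1,x_2\bigr)_{-1,1}.
\]
Bounding $\bigl|\Image\bigl((\tilde S-kD_2)x_1,x_2\bigr)_{-1,1}\bigr|\le\|(\tilde S-kD_2)x_1\|_{-1}\|x_2\|_1$ and applying Young's inequality with weight $km$, so that the resulting $km\|x_2\|_1^2$ is absorbed into $-k\|x_2\|_1^2$, then writing $\|(\tilde S-kD_2)x_1\|_{-1}^2=k^2\bigl\|(\tfrac1k\tilde S-D_2)x_1\bigr\|_{-1}^2$ and factoring out $-k$, the resulting bracket is precisely the numerator of \eqref{Omega1}; hence \eqref{Omega1} yields
\[
  \Real[\A\vectx,\vectx]_{\Hil}\le -k\omega_1\|x_1\|^2-k(1-m)\|x_2\|_1^2 .
\]

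It remains to compare this with $[\vectx,\vectx]_{\Hil}$. From $[\vectx,\vectx]_{\Hil}=\|x_2\|_1^2+k(D_1x_2,x_2)_{-1,1}+\|x_1\|^2+2k\Real(x_1,x_2)$ and $2k|\Real(x_1,x_2)|\le\|x_1\|^2+k^2\|x_2\|^2$, the definition \eqref{Omega2} of $\omega_2$ gives $[\vectx,\vectx]_{\Hil}\le 2\|x_1\|^2+\omega_2\|x_2\|_1^2$. Because $\theta\le\omega_1/2$ and $\theta\le(1-m)/\omega_2$ (and $\theta\ge0$), we obtain $k\omega_1\|x_1\|^2+k(1-m)\|x_2\|_1^2\ge k\theta\bigl(2\|x_1\|^2+\omega_2\|x_2\|_1^2\bigr)\ge k\theta\,[\vectx,\vectx]_{\Hil}$, which together with the previous display gives the required dissipativity estimate, and the proof is finished as in the first paragraph. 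The main obstacle is the exact formula for $\Real[\A\vectx,\vectx]_{\Hil}$: one has to keep track of which space each vector lies in so that all pairings are meaningful, and to correctly reduce the three batches of $x_1$--$x_2$ cross terms — coming from $(T^{1/2}x_1,T^{1/2}x_2)$, from $-(\tilde Ax_2,x_1)_{-1,1}$, and from $k(D_1x_1,x_2)_{-1,1}-k(Dx_1,x_2)_{-1,1}$ — to the single term $-\Image\bigl((\tilde S-kD_2)x_1,x_2\bigr)_{-1,1}$ using the conjugate-symmetry relations for $T$, $\tilde S$ and $D_j$. Once that identity is in hand, the rest is routine Cauchy--Schwarz/Young estimates together with the standard $m$-accretive generation theory.
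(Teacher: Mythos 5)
Your proposal is correct and follows essentially the same route as the paper: the same exact formula for $\Real[\A\vectx,\vectx]_{\Hil}$ (your Young inequality with weight $km$ is just the paper's estimate applied before dividing by $-k$), the same comparison $[\vectx,\vectx]_{\Hil}\le 2\|x_1\|^2+\omega_2\|x_2\|_1^2$, and the same use of $0\in\rho(\A)$ to pass from accretivity of $-\A-k\theta I$ to $m$-accretivity and the contraction estimate in the equivalent norm. The only cosmetic difference is that you (correctly) read the term $\frac{1}{4m}\|(\frac1k\tilde S-D_2)x\|_{-1}$ in \eqref{Omega1} as squared, which is what the paper's own proof uses.
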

\begin{proof}
For $\vectx=(x_1,x_2)^\top\in\Dom(\A)$ let us consider a quadric form
 \begin{multline*}
    [\A\vectx,\vectx]_{\Hil}=(T^{1/2}x_1,T^{1/2}x_2)+k(D_1x_1,x_2)_{-1,1}-k^2(x_1,x_2)+\\
    (-Dx_1-\tilde{A}x_2+kx_1,x_1+kx_2)=\\
   (Tx_1,x_2)_{-1,1}+k(D_1x_1,x_2)_{-1,1}-(Dx_1,x_1)_{-1,1}\\-(\tilde{A}x_2,x_1)_{-1,1}
   +k(x_1,x_1)-k(Dx_1,x_2)_{-1,1}-k(\tilde{A}x_2,x_2)_{-1,1}=\\
   -(Dx_1,x_1)_{-1,1}+k(x_1,x_1)-k(\tilde{A}x_2,x_2)_{-1,1}-ik(D_2x_1,x_2)_{-1,1}+\\
   (T x_1,x_2)_{-1,1}-(Tx_2,x_1)_{-1,1}-i(\tilde{S}x_2,x_1)_{-1,1}
 \end{multline*}
We used decompositions $\tilde{A}=T+i\tilde{S}$ and $D=D_1+iD_2$. Consequently,
\begin{multline*}
 \Real[\A\vectx,\vectx]_{\Hil}=-(D_1x_1,x_1)_{-1,1}+k(x_1,x_1)-k(Tx_2,x_2)_{-1,1}-\\
 \Real\Bigl(ik(D_2x_1,x_2)_{-1,1}+i(\tilde{S}x_2,x_1)_{-1,1}\Bigr)=\\
 -(D_1x_1,x_1)_{-1,1}+k\|x_1\|^2-k\|x_2\|^2_1-\\
\Image\left((\tilde{S}x_1,x_2)_{-1,1}-k(D_2x_1,x_2)_{-1,1}\right)
 \end{multline*}
and
 \[
    -\frac{1}{k}\Real[\A\vectx,\vectx]_{\Hil}=\frac{1}{k}(D_1x_1,x_1)_{-1,1}-\|x_1\|^2+
   \|x_2\|^2_1+\Image\left(\left(\frac{1}{k}\tilde{S}-D_2\right)x_1,x_2\right)_{-1,1}.
 \]
Since
 \begin{multline*}  
     \left|\left(\left(\frac{1}{k}\tilde{S}-D_2\right)x_1,x_2\right)_{-1,1}\right|\leq
      \left\|\left(\frac{1}{k}\tilde{S}-D_2\right)x_1\right\|_{-1}\cdot\|x_2\|_{1}\leq\\
    \frac{1}{4m}\left\|\left(\frac{1}{k}\tilde{S}-D_2\right)x_1\right\|^2_{-1}+m\|x_2\|^2_1,
 \end{multline*}
then
 \begin{multline*}
  -\frac{1}{k}\Real[\A\vectx,\vectx]_{\Hil}\geq \frac{1}{k}(D_1 x_1,x_1)_{-1,1}-\|x_1\|^2-
   \frac{1}{4m}\left\|\left(\frac{1}{k}\tilde{S}-D_2\right)x_1\right\|^2_{-1}+\\
  (1-m)\|x_2\|^2_1\geq \omega_1\|x_1\|^2+(1-m)\|x_2\|^2_1.
 \end{multline*}
Further, an inequality
 \[
    2k|\Real(x_1,x_2)|\leq 2|(x_1,kx_2)|\leq 2\|x_1\|\cdot\|kx_2\|\leq \|x_1\|^2+k^2\|x_2\|^2
 \]
implies
 \begin{equation}\label{NormEstimate}
    [\vectx,\vectx]_{\Hil}\leq 2\|x_1\|^2+\|x_2\|_1^2+k(D_1x_2,x_2)_{-1,1}+k^2\|x_2\|^2\leq
    2\|x_1\|^2+\omega_2\|x_2\|^2_1.
 \end{equation}
Thus
 \[
     -\frac{1}{k}\Real[\A\vectx,\vectx]_{\Hil}\geq \omega_1\|x_1\|^2+(1-m)\|x_2\|^2_1\geq
     \theta(2\|x_1\|^2+\omega_2\|x_2\|^2_1)\geq \theta[\vectx,\vectx]_{\Hil}
 \]
and an operator ($-\A-k\theta I$) is accretive. Moreover, the operator ($-\A-k\theta I$) is
 m-accretive (since $0\in\rho(\A)$) and\footnote{Obviously, the operator ($-\A$) is m-accretive as well.}
 \[
   \rho\left(-\A-k\theta I\right)\subset\{\lambda\in\mathbb{C},\;\Real\lambda <0\}\Rightarrow
   \rho(-\A)\supset\{\lambda\in\mathbb{C},\;\Real\lambda <k\theta\}.
 \]
Therefore, the operator $\A$ is a generator of a $C_0$-semigroup \cite{Haase, HillePhillips} $\semigroup(t)=\exp\{t\A\}$, $t\geq 0$ and
 \[
     \bigl|\semigroup(t)\bigr|_{\Hil}\leq \exp(-k\theta t),\quad t\geq 0.
 \]
On the space $\Hil$ norms $|\vectx|_{\Hil}$ and
$\|\vectx\|_{\Hil}$ are equivalent and the inequality
 \[
     \bigl\|\semigroup(t)\bigr\|_{\Hil}\leq \const\cdot \exp(-k\theta t),\quad t\geq 0
 \]
holds for some positive constant.
\end{proof}
\begin{cor}
Under the conditions of the theorem \ref{MainTheorem1} for all $\vectx_0=(u_1,u_0)^\top\in\Dom(\A)$
vector-function
 \[
     \vectx(t)=\begin{pmatrix} w(t) \\ u(t) \end{pmatrix}=\semigroup(t)\vectx_0\in\Dom(\A)
 \]
satisfies the first order differential equation \eqref{FirstOrderEqn}. $u(t)$
satisfies the second-order differential equation \eqref{MainEquationGen} with the initial conditions
\eqref{CauchyProblem} and an inequality
 \[
    \|u(t)\|_1^2+\|u'(t)\|^2\leq \const\cdot \exp\{-2k\theta t\}\Bigl(\|u_0\|^2_1+\|u_1\|^2\Bigr)
 \]
holds for all $t\geq 0$.
\end{cor}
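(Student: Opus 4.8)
The plan is to derive everything from Theorem~\ref{MainTheorem1} together with the standard regularity theory for $C_0$-semigroups, so that the argument reduces to bookkeeping on the block structure of $\A$.

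First I would invoke the theorem: under (A), (B) and \eqref{Omega1} the operator $\A$ generates a $C_0$-semigroup $\semigroup(t)$ on $\Hil$. Since $\vectx_0\in\Dom(\A)$, the orbit $\vectx(t)=\semigroup(t)\vectx_0$ is continuously differentiable as an $\Hil$-valued function, stays in $\Dom(\A)$ for every $t\geq0$ (because $\semigroup(t)\Dom(\A)\subset\Dom(\A)$), and solves $\vectx'(t)=\A\vectx(t)=\semigroup(t)\A\vectx_0$; this already gives that $\vectx(t)$ satisfies \eqref{FirstOrderEqn}. Writing $\vectx(t)=(w(t),u(t))^\top$ and comparing components against $\A\vectx=(-Dx_1-\tilde A x_2,\;x_1)^\top$ yields the two relations $u'(t)=w(t)$ and $w'(t)=-Dw(t)-\tilde A u(t)$, each in the appropriate space.

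Next I would check that $u(t)$ is a solution of \eqref{MainEquationGen} in the sense of the definition in Section~2. The inclusion $\vectx(t)\in\Dom(\A)\subset H_1\times H_1$ gives $u(t),w(t)\in H_1$, and hence $u'(t)=w(t)\in H_1$; differentiability of $w$ in $H$ gives $u''(t)=w'(t)\in H$; and the defining relation of $\Dom(\A)$ gives $Du'(t)+\tilde A u(t)=Dw(t)+\tilde A u(t)\in H$. Substituting $w'=-Dw-\tilde A u$ into $\vectx'=\A\vectx$ then reads $u''(t)+Du'(t)+\tilde A u(t)=0$. The initial conditions \eqref{CauchyProblem} follow from $\vectx(0)=\vectx_0$: namely $u(0)=u_0$ and $u'(0)=w(0)=u_1$.

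Finally, for the decay estimate I would use the operator-norm bound $\|\semigroup(t)\|_{\Hil}\leq\const\cdot\exp(-tk\theta)$ from the theorem. With the natural scalar product on $\Hil=H\times H_1$ one has $\|\vectx(t)\|_{\Hil}^2=\|w(t)\|^2+\|u(t)\|_1^2=\|u'(t)\|^2+\|u(t)\|_1^2$ and $\|\vectx_0\|_{\Hil}^2=\|u_1\|^2+\|u_0\|_1^2$, so squaring the semigroup bound yields exactly the claimed inequality (with the constant replaced by its square). I do not expect a genuine obstacle here; the only point needing a little care is matching the regularity requirements on $u$, $u'$, $u''$ in the definition of a solution with what the semigroup orbit provides, and this is immediate once one observes that $\vectx(t)$ lies in $\Dom(\A)$ rather than merely in $\Hil$.
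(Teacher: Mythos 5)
Your proposal is correct and follows exactly the route the paper intends (the paper states the corollary without proof, as an immediate consequence of Theorem \ref{MainTheorem1} plus the standard fact that $\semigroup(t)$ maps $\Dom(\A)$ into itself and yields classical solutions for initial data in $\Dom(\A)$). The component-wise identification $u'=w$, $w'=-Dw-\tilde A u$ and the squaring of the operator-norm bound in the natural norm of $\Hil=H\times H_1$ are precisely the intended bookkeeping.
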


Consider now a more strong assumption on the operator $D$:
\begin{itemize}
\item[(C)] $D\in\Lin(H_{1},H_{-1})$ and
 \[
    \delta=\inf_{x\in H_{1},x\ne 0}\frac{\Real(Dx,x)_{-1,1}}{\|x\|^2_1}>0.
 \]
\end{itemize}
It is easy to show that the assumption (C) implies (B) and $\beta>a_0\delta$.

By $\|\tilde{S}\|$  and  $\|D_2\|$ denote norms of the bounded operators $\tilde{S}\in\Lin(H_1,H_{-1})$
and $D_2\in \Lin(H_1,H_{-1})$. Then for all $x\in H_1$
   \[
      \|\tilde{S}x\|_{-1}\leq \|\tilde{S}\|\cdot\|x\|_1,\quad\|D_2x\|_{-1}\leq \|D_2\|\cdot\|x\|_1
   \]

\begin{thm}\label{MainTheorem2}
Let the assumptions (A) and (C) are fulfilled and for some $k\in(0,\beta)$ and some $p,q>0$ with $p+q\leq1$
 \[
     \omega_1'=a_0\left(\frac{\delta}{k}-\frac1{4pk^2}\|\tilde{S}\|^2
     -\frac{1}{4q}\|D_2\|^2\right)\geq1
 \]
Then the operator $\A$ is a generator of a $C_0$-semigroup $\semigroup(t)=\exp\{t\A\}$ $(t\geq0)$ and
 \[
     \bigl\|\semigroup(t)\bigr\|_{\Hil}\leq \const\cdot \exp(-t k\theta')
 \]
where
  \[
      \theta'=\min\left\{\frac{\omega_1'-1}{2}, \frac{1-p-q}{\omega_2}\right\}\geq 0
  \]
and $\omega_2$ is defined by \eqref{Omega2}.
\end{thm}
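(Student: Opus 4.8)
The plan is to re-run the argument of Theorem~\ref{MainTheorem1} almost verbatim, changing only the way the cross term in $\Real[\A\vectx,\vectx]_{\Hil}$ is estimated: the two parameters $p$ and $q$ let one split the contributions of $\tilde{S}$ and $D_2$ separately, and this is precisely what makes the sharper coercivity hypothesis (C), used in the form $(D_1x,x)_{-1,1}=\Real(Dx,x)_{-1,1}\ge\delta\|x\|_1^2$, applicable.

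First I would carry out, word for word, the computation from the proof of Theorem~\ref{MainTheorem1} based on the decompositions $\tilde{A}=T+i\tilde{S}$ and $D=D_1+iD_2$, arriving for $\vectx=(x_1,x_2)^\top\in\Dom(\A)$ at the identity
\[
   -\frac1k\Real[\A\vectx,\vectx]_{\Hil}=\frac1k(D_1x_1,x_1)_{-1,1}-\|x_1\|^2+\|x_2\|^2_1+\Image\left(\left(\frac1k\tilde{S}-D_2\right)x_1,x_2\right)_{-1,1}.
\]
Then, instead of the single Young inequality used there, I would apply the triangle inequality in $H_{-1}$ followed by two separate Young inequalities,
\begin{multline*}
   \left|\Image\left(\left(\frac1k\tilde{S}-D_2\right)x_1,x_2\right)_{-1,1}\right|\le\frac1k\|\tilde{S}x_1\|_{-1}\|x_2\|_1+\|D_2x_1\|_{-1}\|x_2\|_1\le\\
   \frac{1}{4pk^2}\|\tilde{S}x_1\|^2_{-1}+\frac{1}{4q}\|D_2x_1\|^2_{-1}+(p+q)\|x_2\|^2_1,
\end{multline*}
and substitute the operator-norm bounds $\|\tilde{S}x_1\|_{-1}\le\|\tilde{S}\|\,\|x_1\|_1$, $\|D_2x_1\|_{-1}\le\|D_2\|\,\|x_1\|_1$ together with the coercivity $(D_1x_1,x_1)_{-1,1}\ge\delta\|x_1\|^2_1$ from (C). This gives
\[
   -\frac1k\Real[\A\vectx,\vectx]_{\Hil}\ge\left(\frac\delta k-\frac{1}{4pk^2}\|\tilde{S}\|^2-\frac{1}{4q}\|D_2\|^2\right)\|x_1\|^2_1-\|x_1\|^2+(1-p-q)\|x_2\|^2_1.
\]

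Since $\omega_1'\ge1$ and $a_0>0$, the coefficient of $\|x_1\|^2_1$ equals $\omega_1'/a_0>0$, so one may bound $\|x_1\|^2_1\ge a_0\|x_1\|^2$ and collapse the first two terms to $(\omega_1'-1)\|x_1\|^2$, obtaining
\[
   -\frac1k\Real[\A\vectx,\vectx]_{\Hil}\ge(\omega_1'-1)\|x_1\|^2+(1-p-q)\|x_2\|^2_1\ge0.
\]
Combining this with the already established bound \eqref{NormEstimate}, namely $[\vectx,\vectx]_{\Hil}\le2\|x_1\|^2+\omega_2\|x_2\|^2_1$, and with the definition of $\theta'$ (so that $\omega_1'-1\ge2\theta'$ and $1-p-q\ge\theta'\omega_2$), one gets $-\frac1k\Real[\A\vectx,\vectx]_{\Hil}\ge\theta'\bigl(2\|x_1\|^2+\omega_2\|x_2\|^2_1\bigr)\ge\theta'[\vectx,\vectx]_{\Hil}$, i.e.\ the operator $(-\A-k\theta'I)$ is accretive for the inner product $[\cdot,\cdot]_{\Hil}$. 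From here the last part of the proof of Theorem~\ref{MainTheorem1} applies without change: accretivity together with $0\in\rho(\A)$ upgrades to m-accretivity of $(-\A-k\theta'I)$, hence $\rho(-\A)\supset\{\lambda\in\mathbb{C}:\Real\lambda<k\theta'\}$, so $\A$ generates a $C_0$-semigroup $\semigroup(t)$ with $|\semigroup(t)|_{\Hil}\le\exp(-k\theta't)$, and the equivalence of $|\cdot|_{\Hil}$ with $\|\cdot\|_{\Hil}$ yields the stated estimate with a constant. I do not anticipate a genuine obstacle: the whole argument is a re-run of the previous proof, and the only new ingredient is organizing the cross-term estimate around the two parameters $p,q$ so that assumption (C) enters through $\delta\|x\|^2_1$ rather than merely through $\beta\|x\|^2$.
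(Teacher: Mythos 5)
Your proposal is correct and follows essentially the same route as the paper's own proof: the same identity for $-\frac1k\Real[\A\vectx,\vectx]_{\Hil}$, the same splitting of the cross term with two Young inequalities weighted by $p$ and $q$, the coercivity $\delta\|x\|_1^2$ from (C) combined with $\|x\|_1^2\ge a_0\|x\|^2$, and the same finish via \eqref{NormEstimate} and m-accretivity of $-\A-k\theta'I$. The only cosmetic difference is that you apply the triangle inequality to $(\frac1k\tilde{S}-D_2)x_1$ before splitting, whereas the paper keeps the $\tilde S$ and $D_2$ terms separate from the start.
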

\begin{proof}
Consider on Hilbert space $\Hil=H\times H_{1}$ the scalar product $[\vectx,\vecty]_{\Hil}$. Then
 \begin{multline*}
    -\frac{1}{k}\Real[\A\vectx,\vectx]_{\Hil}=\frac{1}{k}(D_1x_1,x_1)_{-1,1}-\|x_1\|^2+
   \|x_2\|^2_1+\\ \frac{1}{k}\Image(\tilde{S}x_1,x_2)_{-1,1}-\Image(D_2x_1,x_2)_{-1,1}
 \end{multline*}
(see the proof of the theorem \ref{MainTheorem1}). Since
 \begin{multline*}
  |\Image(D_2x_1,x_2)_{-1,1}|\leq|(D_2x_1,x_2)_{-1,1}| \leq \|D_2 x_1\|_{-1}\cdot\|x_2\|_1\leq \\
  \frac{1}{4q}\|D_2x_1\|^2_{-1}+q\|x_2\|^2_1\leq \frac{1}{4q}\|D_2\|^2\cdot\|x_1\|^2_{1}+q\|x_2\|^2_1
 \end{multline*}
 \begin{multline*}
  \frac1k|\Image(\tilde{S}x_1,x_2)_{-1,1}|\leq|(\frac1k\tilde{S}x_1,x_2)_{-1,1}| \leq
  \left\|\frac1k\tilde{S} x_1\right\|_{-1}\cdot\|x_2\|_1\leq \\
  \frac{1}{4p}\left\|\frac1k\tilde{S}x_1\right\|^2_{-1}+p\|x_2\|^2_1\leq
  \frac{1}{4pk^2}\|\tilde{S}\|^2\cdot\|x_1\|^2_{1}+p\|x_2\|^2_1
 \end{multline*}
and taking into account $(D_1x,x)_{-1,1}\geq\delta \|x\|^2_1$ and $\|x\|^2_1\geq a_0\|x\|^2$
we obtain
 \begin{multline*}
    -\frac{1}{k}\Real[\A\vectx,\vectx]_{\Hil}\geq \frac{1}{k}(D_1x_1,x_1)_{-1,1}-\|x_1\|^2-\\
    \frac{\|\tilde{S}\|^2}{4pk^2}\cdot\|x_1\|^2_{1}-
    \frac{\|D_2\|^2}{4q}\cdot\|x_1\|^2_{1}+(1-p-q)\|x_2\|^2_1\geq\\
    \left(\frac{\delta}{k}-\frac{\|\tilde{S}\|^2}{4pk^2}-\frac{\|D_2\|^2}{4q}\right)\|x_1\|^2_1-
    \|x_1\|^2+(1-p-q)\|x_2\|^2_1\geq \\
    (\omega_1'-1)\|x_1\|^2+(1-p-q)\|x_2\|^2_1.
 \end{multline*}
Using \eqref{NormEstimate} we finally have
 \[
    -\frac{1}{k}\Real[\A\vectx,\vectx]_{\Hil}\geq \theta' [\vectx,\vectx]_{\Hil}.
 \]
Thus an operator ($-\A-k\theta' I$) in m-accretive (since $0\in\rho(\A)$) and
 \[
   \rho(-\A)\supset\{\lambda\in\mathbb{C},\;\Real\lambda <k\theta'\}.
 \]
Therefore, the operator $\A$ is a generator  of a $C_0$-semigroup \cite{Haase, HillePhillips}
$\semigroup(t)=\exp\{t\A\}$ ($t\geq 0$) and
 \[
     \bigl|\semigroup(t)\bigr|_{\Hil}\leq \exp(-k\theta' t),\quad t\geq 0.
 \]
Since the norms $|\vectx|_{\Hil}$ and
$\|\vectx\|_{\Hil}$ are equivalent then we have an inequality
 \[
     \bigl\|\semigroup(t)\bigr\|_{\Hil}\leq \const\cdot \exp(-k\theta' t),\quad t\geq 0
 \]
for some positive constant.
\end{proof}

\begin{cor}
Under the conditions of the theorem \ref{MainTheorem2} for all $\vectx_0=(u_1,u_0)^\top\in\Dom(\A)$ a
vector-valued function
 \[
     \vectx(t)=\begin{pmatrix} w(t) \\ u(t) \end{pmatrix}=\semigroup(t)\vectx_0\in\Dom(\A)
 \]
satisfies the first order differential equation
\eqref{FirstOrderEqn}. $u(t)$ satisfies the second-order
differential equation \eqref{MainEquationGen} with an initial
conditions \eqref{CauchyProblem} and the inequality
 \[
    \|u(t)\|_1^2+\|u'(t)\|^2\leq \const\cdot \exp\{-2k\theta' t\}\Bigl(\|u_0\|^2_1+\|u_1\|^2\Bigr)
 \]
holds for all $t\geq 0$.
\end{cor}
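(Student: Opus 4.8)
The plan is to derive all three assertions directly from the single fact, established in Theorem \ref{MainTheorem2}, that $\A$ generates the $C_0$-semigroup $\semigroup(t)=\exp\{t\A\}$; no further estimates are needed, since the analytic content already resides in that theorem.

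First I would invoke the standard regularity of $C_0$-semigroups on the generator's domain \cite{HillePhillips}: for every $\vectx_0\in\Dom(\A)$ the orbit $\vectx(t)=\semigroup(t)\vectx_0$ is continuously differentiable, remains in $\Dom(\A)$ for all $t\geq0$, and satisfies $\vectx'(t)=\A\vectx(t)=\semigroup(t)\A\vectx_0$. This is precisely the first-order equation \eqref{FirstOrderEqn}, and it simultaneously yields the asserted inclusion $\vectx(t)\in\Dom(\A)$.

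Next I would unfold this identity componentwise. Writing $\vectx(t)=(w(t),u(t))^\top$ and inserting the block form of $\A$, the equation $\vectx'(t)=\A\vectx(t)$ splits into $w'(t)=-Dw(t)-\tilde{A}u(t)$ and $u'(t)=w(t)$. The second relation shows $u'(t)=w(t)\in H_1$; substituting $w=u'$ into the first gives $u''(t)+Du'(t)+\tilde{A}u(t)=0$, which is \eqref{MainEquationGen}. The membership $\vectx(t)\in\Dom(\A)$ guarantees $Du'(t)+\tilde{A}u(t)=Dw(t)+\tilde{A}u(t)\in H$ and $u''(t)=w'(t)\in H$, so $u$ is a solution in the sense of the definition. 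Evaluating at $t=0$, from $\vectx(0)=\vectx_0=(u_1,u_0)^\top$ I read off $u(0)=u_0$ and $u'(0)=w(0)=u_1$, i.e. the Cauchy conditions \eqref{CauchyProblem}.

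Finally, for the decay bound I would feed the theorem's estimate into the explicit energy norm. The natural scalar product on $\Hil=H\times H_1$ gives $\|\vectx(t)\|_{\Hil}^2=\|u'(t)\|^2+\|u(t)\|_1^2$ and $\|\vectx_0\|_{\Hil}^2=\|u_1\|^2+\|u_0\|_1^2$; combining this with $\|\semigroup(t)\|_{\Hil}\leq\const\cdot\exp(-k\theta't)$ from Theorem \ref{MainTheorem2} and squaring yields $\|u(t)\|_1^2+\|u'(t)\|^2\leq\const\cdot\exp\{-2k\theta't\}(\|u_0\|_1^2+\|u_1\|^2)$. There is no genuine obstacle; the only points to watch are the identification of the $\Hil$-norm with $\|u'\|^2+\|u\|_1^2$ and the harmless renaming of the squared constant in the last step.
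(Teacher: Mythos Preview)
Your proposal is correct. The paper states this corollary without proof, treating it as immediate from Theorem~\ref{MainTheorem2} and standard semigroup theory; your argument spells out precisely the details the paper leaves implicit---the classical solution property of $C_0$-semigroups on $\Dom(\A)$, the componentwise unpacking of \eqref{FirstOrderEqn} into \eqref{MainEquationGen}, and the identification $\|\vectx\|_{\Hil}^2=\|u'\|^2+\|u\|_1^2$---so there is nothing to compare beyond noting that your write-up is the natural expansion of what the paper takes for granted.
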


\section{Related spectral problem}

Let us consider a quadric pencil associated with the differential equation \eqref{MainEquation}
 \[
     L(\lambda)=\lambda^2I+\lambda D+A\quad \lambda\in\mathbb{C}.
 \]
Since $D:H_1\to H_{-1}$ it is more naturally to consider an extension of pencil
 \[
    \tilde{L}(\lambda)=\lambda^2I+\lambda D+\tilde{A}
 \]
mapping $H_1$ to $H_{-1}$. Moreover, $\tilde{L}(\lambda)\in\Lin(H_1,H_{-1})$ for all
$ \lambda\in\mathbb{C}$.
\begin{defn}
The resolvent set of the pencil $\tilde{L}(\lambda)$ is defined as
 \[
    \rho(\tilde{L})=\{\lambda\in\mathbb{C}\;:\; \exists \tilde{L}^{-1}(\lambda)\in\Lin(H_{-1},H_1)\}
 \]
The spectrum of the pencil is  $\sigma(\tilde{L})=\mathbb{C}\backslash\rho(\tilde{L})$.
\end{defn}
In \cite{HrynivShkalikov2,Shkalikov} it was proved that $\sigma(\tilde{L})=\sigma(\A)$ and for $\lambda\ne0$
 \[
     (\A-\lambda I)^{-1}=\begin{pmatrix}
      \lambda^{-1}\left(\tilde{L}^{-1}(\lambda)\tilde{A}-I\right) & -\tilde{L}^{-1}(\lambda) \\
      \tilde{L}^{-1}(\lambda)\tilde{A} & -\lambda\tilde{L}^{-1}(\lambda)
     \end{pmatrix}
 \]
This result allows to obtain a localization of the pencil's spectrum in a half-plane.
\begin{prop}
\textbf{1.} Under the conditions of the theorem \ref{MainTheorem1} the spectrum of the pencil $\tilde{L}(\lambda)$ belongs to
a half-plane
 \[
    \sigma(\tilde{L})\subseteq\{\Real\leq -k\theta\}.
 \]
\textbf{2.} Under the conditions of the theorem \ref{MainTheorem2} the spectrum of the pencil $\tilde{L}(\lambda)$ belongs to
a half-plane
 \[
    \sigma(\tilde{L})\subseteq\{\Real\leq -k\theta'\}.
 \]
\end{prop}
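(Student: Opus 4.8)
The plan is to deduce the spectral localization directly from the semigroup estimates already established in Theorems~\ref{MainTheorem1} and~\ref{MainTheorem2}, together with the identity $\sigma(\tilde L)=\sigma(\A)$ cited from \cite{HrynivShkalikov2,Shkalikov}. The essential point is that in the proofs of those theorems we did not merely exhibit a $C_0$-semigroup; we showed that the operator $(-\A-k\theta I)$ (respectively $(-\A-k\theta' I)$) is m-accretive with respect to the equivalent inner product $[\cdot,\cdot]_{\Hil}$, and for an m-accretive operator $B$ one has $\rho(B)\supseteq\{\Real\lambda<0\}$. This was in fact recorded verbatim inside the proof of Theorem~\ref{MainTheorem1} as the implication $\rho(-\A)\supseteq\{\Real\lambda<k\theta\}$, and analogously for Theorem~\ref{MainTheorem2} as $\rho(-\A)\supseteq\{\Real\lambda<k\theta'\}$.

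First I would translate the resolvent condition on $-\A$ into one on $\A$ itself: $\rho(-\A)\supseteq\{\Real\lambda<k\theta\}$ means exactly $\rho(\A)\supseteq\{\Real\mu>-k\theta\}$ via $\mu=-\lambda$, hence $\sigma(\A)\subseteq\{\Real\mu\leq-k\theta\}$. Second, I would invoke the equality $\sigma(\tilde L)=\sigma(\A)$ from \cite{HrynivShkalikov2,Shkalikov} (valid because $0\in\rho(\A)$, so the point $\lambda=0$ is handled and does not need special treatment, and for $\lambda\ne0$ the explicit block form of $(\A-\lambda I)^{-1}$ in terms of $\tilde L^{-1}(\lambda)$ shows $\lambda\in\rho(\A)\iff\lambda\in\rho(\tilde L)$). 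Combining the two gives $\sigma(\tilde L)\subseteq\{\Real\lambda\leq-k\theta\}$, which is claim~\textbf{1}. Claim~\textbf{2} follows by the identical argument with $\theta$ replaced by $\theta'$ and Theorem~\ref{MainTheorem1} replaced by Theorem~\ref{MainTheorem2}.

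There is essentially no obstacle here beyond bookkeeping: the real work was already done in establishing m-accretivity of the shifted operators, and the spectral mapping between pencil and companion operator is quoted. The one point requiring a word of care is that m-accretivity, and hence the half-plane in the resolvent set, is established with respect to the equivalent norm $|\cdot|_{\Hil}$ rather than $\|\cdot\|_{\Hil}$; but since resolvent sets are purely topological/spectral objects and the two norms are equivalent, $\rho(\A)$ is the same whichever norm one uses, so this causes no difficulty. I would therefore write the proof as a short two-line deduction for each of the two cases.

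\begin{proof}
\textbf{1.} By the proof of Theorem~\ref{MainTheorem1} the operator $(-\A-k\theta I)$ is m-accretive on $\Hil$ (equipped with the equivalent inner product $[\cdot,\cdot]_{\Hil}$), so $\rho(-\A)\supseteq\{\lambda\in\mathbb{C}\;:\;\Real\lambda<k\theta\}$, i.e. $\sigma(\A)\subseteq\{\Real\lambda\leq-k\theta\}$. Since $0\in\rho(\A)$ and, for $\lambda\ne0$, the operator $\A-\lambda I$ is invertible if and only if $\tilde L(\lambda)$ is invertible (by the block representation of $(\A-\lambda I)^{-1}$ quoted above), we have $\sigma(\tilde L)=\sigma(\A)$, hence $\sigma(\tilde L)\subseteq\{\Real\lambda\leq-k\theta\}$.

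\textbf{2.} The same argument applies, using the proof of Theorem~\ref{MainTheorem2}: the operator $(-\A-k\theta' I)$ is m-accretive, so $\rho(-\A)\supseteq\{\Real\lambda<k\theta'\}$ and $\sigma(\A)\subseteq\{\Real\lambda\leq-k\theta'\}$. Together with $\sigma(\tilde L)=\sigma(\A)$ this yields $\sigma(\tilde L)\subseteq\{\Real\lambda\leq-k\theta'\}$.
\end{proof}
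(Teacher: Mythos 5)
Your proposal is correct and is exactly the argument the paper intends: combine the resolvent inclusion $\rho(-\A)\supseteq\{\Real\lambda<k\theta\}$ (resp.\ $k\theta'$) established via m-accretivity in the proofs of Theorems~\ref{MainTheorem1} and~\ref{MainTheorem2} with the identity $\sigma(\tilde L)=\sigma(\A)$ quoted from the cited references. Your remark that the resolvent set is unaffected by passing to the equivalent norm is a correct and worthwhile clarification.
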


Acknowledgement: the author thanks Prof. Carsten Trunk for fruitful discussions during IWOTA 2009 and
Prof. A.A. Shkalikov for discussions.


\end{document}